\newtheorem{prop}{Proposition}[section]
\newtheorem{thm}[prop]{Theorem}
\newtheorem*{thm*}{Theorem}
\numberwithin{equation}{section}
\newcommand{\Z}{\mathbb{Z}}
\newcommand{\C}{\mathbb{C}}
\newcommand{\cL}{\mathcal{L}}
\newcommand{\cE}{\mathcal{E}}
\newcommand{\bS}{S}
\newcommand{\Cs}{C*-}
\newcommand{\id}{\mathrm{id}}
\newcommand{\inn}[1]{\langle #1 \rangle}
\renewcommand{\[}{\begin{equation}}
\renewcommand{\]}{\end{equation}}
\title[Associated vector bundles over quantum projective spaces]{\vspace*{-23mm}
\mbox{Associated noncommutative vector bundles over}\\
\hspace*{-8mm}\mbox{the Vaksman--Soibelman quantum complex projective spaces}} 
\author[F.~Arici]{Francesca Arici} 
\address[F.~Arici]{Max-Planck-Institut f\"ur Mathematik in den Naturwissenschaften, Inselstr.~22, 04103 Leipzig, Leipzig, Germany.}
\email{francesca.arici@mis.mpg.de}
\author[P. M.~Hajac]{Piotr M.~Hajac}
\address[P. M.~Hajac]{Instytut Matematyczny, Polska Akademia Nauk, ul. \'Sniadeckich 8, Warszawa, 00-656 Poland}
\email{pmh@impan.pl }
\author[M.~Tobolski]{Mariusz Tobolski}
\address[M.~Tobolski]{Instytut Matematyczny, Polska Akademia Nauk, ul. \'Sniadeckich 8, Warszawa, 00-656 Poland}
\email{mtobolski@impan.pl }
\subjclass[2010]{46L80, 46L85, 58B32, 57R22}
\keywords{K-theory, noncommutative vector bundle, compact quantum group, Peter--Weyl decomposition, principal comodule algebra}
\begin{document}
\baselineskip15pt
\parskip=0.5\baselineskip

\begin{abstract}
By a diagonal embedding of $U(1)$ in $SU_q(m)$, 
we prolongate the diagonal circle action on 
the Vaksman--Soibelman quantum sphere~$\bS^{2n+1}_q$ to the $SU_q(m)$-action
on the prolongated bundle.
Then we prove that
the noncommutative vector bundles associated via the fundamental representation of~$SU_q(m)$,
for $m\in\{2,\ldots,n\}$,
yield generators of the even K-theory group of the C*-algebra 
 of the Vaksman--Soibelman quantum complex projective space
 $\mathbb{C}{\rm P}^n_q$.
\end{abstract}
\maketitle

\section{Introduction}
The K-theory of complex projective spaces was unraveled by Atiyah and Todd in \cite[Prop. 2.3, 3.1 and 3.3]{AT60} 
(cf.~\cite[Thm.~7.2]{A62}~and~\cite[Cor. IV.2.8]{Ka78}).
Denoting by $\mathrm{L}^n_1$ the dual tautological line bundle over $\mathbb{C}{\rm P}^n$ and setting
$$
\mathrm{t}:=[\mathbb{C}{\rm P}^n\!\times\!\mathbb{C}]-[\mathrm{L}^n_1]\in K^0(\mathbb{C}{\rm P}^n),
$$
one obtains
\[\label{atiyah}
K^0(\mathbb{C}{\rm P}^n) =\Z[\mathrm{t}] / \mathrm{t}^{n+1} \, .
\]
Recently, this result was extended to the Vaksman--Soibelman quantum complex projective spaces 
\cite[Prop. 3.3 and 3.4]{ABL15}:
\[\label{truncated}
K_0(C(\mathbb{C}{\rm P}^n_q)) = \Z[t] / t^{n+1} \, .
\]
Here $0<q<1$, $t := [1] - [L^n_1]$, and $L^n_1$ is the section module of the noncommutative dual tautological line bundle
over~$\mathbb{C}{\rm P}^n_q$.

The K-theory of both the Vaksman--Soibelman and the multipullback quantum complex projective \emph{planes}
\cite{HKZ12,Ru12,HR17} was thoroughly analysed in~\cite{FHMZ17}. In this special $n=2$ case, the following elements
\[\label{set}
[1],\quad [L^2_1]-[1],\quad [L^2_{-1}\oplus L^2_1]-2[1]
\]
form a basis of~$K_0(C(\mathbb{C}{\rm P}^2_q))$. 
Here $L^2_{-1}$ is the section module of the noncommutative  tautological line bundle
over~$\mathbb{C}{\rm P}^2_q$.

A particular feature of the module $L^2_{-1}\oplus L^2_1$ established 
in \cite{FHMZ17} is
that it can be obtained as a Milnor module \cite[Thm.~2.1]{Mi71} constructed from the fundamental representation of~$SU_q(2)$.
This was achieved by taking $U(1)$ as a subgroup of~$SU_q(2)$, and then realising $L^2_{-1}\oplus L^2_1$ as
the section module of the noncommutative \emph{vector} bundle associated to the prolongation
$\bS^{5}_q\times_{U(1)}SU_q(2)$ via the fundamental representation of~$SU_q(2)$.
The upshot of having $[L^2_{-1}\oplus L^2_1]-2[1]$ as the image of a \Cs algebraic Milnor connecting homomorphism \cite[Sec. 0.4]{HRZ13} is that
it allows one to prove that the set \eqref{set} is a basis without using the index pairing over~$\mathbb{C}{\rm P}^2_q$. 

The goal of this paper is to show that the above construction works in
any dimension. More precisely, except for the classes $[1]$ and $[L^n_1]-[1]$, we prove that all other generators 
of $K_0(C(\mathbb{C}{\rm P}^n_q))$ come from the section module of the noncommutative vector bundle associated to the prolongation
$\bS^{2n+1}_q\times_{U(1)}SU_q(m)$, for  $m\in\{2,\ldots,n\}$, via the fundamental representation of~$SU_q(m)$.

Finally, since there is the  exact sequence \cite{ADHT18}
$$
0\longrightarrow K_1(C(S^{2n-1}_q))\stackrel{\partial_{10}}{\longrightarrow} K_0(C(\mathbb{C}{\rm P}^n_q))
\stackrel{}{\longrightarrow} K_0(C(\C{\rm P}^{n-1}_q))\longrightarrow 0\,, 
$$
a generator of $K_0(C(\mathbb{C}{\rm P}^n_q))$ that does not come from $\C{\rm P}^{n-1}_q$
can always be constructed via the Milnor connecting homomorphism
$\partial_{10}$
from a generator of $K_1(C(S^{2n-1}_q))$. Combining this with the fact that $S^{2n-1}_q$ is a homogeneous space
of~$SU_q(n)$, we arrive at the following:

\noindent\textbf{Question}: Can a generator of the $K_1(C(S^{2n-1}_q))\cong\mathbb{Z}$ be always expressed in
terms of representations of~$SU_q(n)$?

\noindent
This question has positive answer for $q=1$ by the work of Harris \cite{Ha68} based on the work of Atiyah and Hodgkin
\cite{At65,Ho67}. For $0<q<1$ and $n=2$, we just take the fundamental representation of~$SU_q(2)$.

\section{Preliminaries}

The Vaksman--Soibelman odd quantum spheres \cite{VS91} are defined as quantum homogenous spaces for Woronowicz's quantum special unitary groups \cite{Wor88}:
 $$
 C(S^{2n+1}_q):=C({SU}_q(n+1))^{{SU}_q(n)}.
 $$
Here $0<q\leq 1$, and we use the notation $A^\mathbb{G}$ for the fixed-point subalgebra of a C*-algebra $A$ under
an action of a compact quantum group~$\mathbb{G}$. (Note that the $q=1$ case recovers the classical situation.)
One can show that $C(S^{2n+1}_q)$ is the universal \Cs algebra given by the following generators and relations:
\begin{gather*}
z_iz_j = qz_jz_i \quad \mbox{for $i<j$}, \qquad z_iz^*_j = qz_j^*z_i \quad \mbox{for $i\neq j$},\\
z_iz_i^* = z_i^*z_i + (q^{-2}-1)\sum_{m=i+1}^n z_mz_m^*, \qquad \sum_{m=0}^n z_mz_m^*=1.
\end{gather*}


Much like in the classical case, the Vaksman--Soibelman quantum odd sphere enjoy the diagonal circle action given on generators 
by
$$
(z_0,z_1, \dots, z_n) \mapsto (\lambda z_0, \lambda z_1, \dots, \lambda z_n ), \qquad \lambda \in U(1).
$$
One uses this action to define the quantum complex projective spaces \cite{VS91}
$$
C(\mathbb{C}{\rm P}^n_q): = C(S^{2n+1}_q)^{U(1)}.
$$

Recall that freeness of circle actions can be characterised in terms of their spectral subspaces (see e.g.\ \cite{Pa81}). Given an action $\alpha : U(1) \to \mathrm{Aut}(A)$ on a unital \Cs algebra, for each character $m\in\mathbb{Z}$, one defines the \emph{$m$-th 
spectral subspace} $A_m$ as
\begin{equation*}
A_m := \{a \in A \mid \alpha_\lambda(a) = \lambda^m a\text{ for all }\lambda \in U(1)\}.
\end{equation*}
The subspace $A_0$ agrees with the fixed-point subalgebra $A^{U(1)}$, and 
$A_m A_n\subseteq A_{m+n}$ for all $m,n \in \Z$, turning $A$ into a $\Z$-graded algebra. 
Now, one can say that the action $\alpha$ is \emph{free} if and only if the $\mathbb{Z}$-grading is \emph{strong} \cite{Ul81}, i.e.\ $A_mA_n=A_{m+n}$ for all $m, n \in \Z$. It is straightforward to check that the bimodules $A_m$ are finitely generated projective both as left and right
$A^{U(1)}$-modules. Furthermore, they are invertible and they can be interpreted as modules of sections of \emph{associated 
noncommutative line bundles} (see e.g.\ \cite{AKL16}).

Note that, using the spatial tensor product, the action $\alpha$ can be dualised to the coaction 
$$
\delta: A \longrightarrow A \underset{\mathrm{min}}{\otimes} C(U(1)) = C(U(1),A), \qquad \delta(a)(\lambda) := \alpha_{\lambda}(a) \,.
$$
Denote by $\mathcal{O}(U(1))$ the dense Hopf subalgebra of $C(U(1))$ consisting of Laurent polynomials in one variable. The Peter--Weyl $\mathcal{O}(U(1))$-comodule algebra  $\mathcal{P}_{U(1)} (A)$, defined as the set of all $a \in A$ such that $\delta(a) \in A \otimes \mathcal{O}(U(1))$ (see \cite{BdCH}), is the purely algebraic direct sum of spectral subspaces:
\begin{equation}
\label{eq:PW}
\mathcal{P}_{U(1)} (A) = \bigoplus_{m \in \Z} A_m.
\end{equation}
The $\mathcal{P}_{U(1)} (A)$ comodule algebra is principal in the sense of~\cite{HKMZ11}. 

A centrepiece concept for our paper is the notion of a  prolongation
 of a principal comodule algebra. To define it, we need to recall the construction of
cotensor product.
For a coalgebra $C$,
the cotensor product of a right $C$-comodule $M$ with a coaction~$\rho_R:M\to M\otimes C$ and a left
$C$-comodule $N$ with a coaction $\rho_L:N\to C\otimes N$ is defined as the difference kernel
$$
M\overset{C}{\square}N:=\ker\left(\rho_R\otimes\id-\id\otimes\rho_L:M\otimes N\longrightarrow M\otimes C\otimes N\right).
$$
Given a surjection of Hopf algebras $\pi\colon \mathcal{H}\to\bar{\mathcal{H}}$, 
we can treat $\mathcal{H}$ as a left $\bar{\mathcal{H}}$-comodule
via the coaction $(\pi\otimes\id)\circ\Delta$. For any right $\bar{\mathcal{H}}$-comodule algebra $\mathcal{P}$,
we define its
\emph{prolongation} as the cotensor product
$\mathcal{P}\Box^{\bar{\mathcal{H}}}\mathcal{H}$. It is easy to check that this cotensor product is 
a right $\mathcal{H}$-comodule algebra
for the coaction $\id\otimes\Delta$. It is also straightforward to verify that, if $\mathcal{P}$ is principal, then so
is its prolongation.

Let $\mathcal{P}$ be a principal $\mathcal{H}$-comodule algebra with a
coaction~$\Delta_R$ and let $V$ be a left
$\mathcal{H}$-comodule. 
Much as in the classical case, we can form the associated left module $\mathcal{P}\Box^{\mathcal{H}}V$
over the coaction-invariant subalgebra 
$\mathcal{P}^{\mathrm{co}\,\mathcal{H}}:=\{a\in \mathcal{P}\;|\;\Delta_R(a)=a\otimes 1\}$. We think of this module
as the section module of the \emph{associated noncommutative vector bundle}. If $\mathcal{P}$ is principal and 
$V$ is finite dimensional,
then it is known that the associated module is finitely generated projective.

\section{Line bundles}

As explained in the introduction, the K-theory of the Vaksman--Soibelman quantum complex projective spaces
is determined by section modules of associated noncommutative line bundles. Therefore, we begin our considerations
by putting together some facts involving these noncommutative line bundles.

For the Vaksman--Soibelman quantum sphere, the Peter--Weyl subalgebra \eqref{eq:PW} becomes
$$
\mathcal{P}_{U(1)} (C(S^{2n+1}_q)) = \bigoplus_{m \in \Z} {L}^n_m,
$$
where
$$ L^n_m := \lbrace a \in C(S^{2n+1}_q) \mid \alpha_\lambda(a) = \lambda^m a\text{ for all }\lambda \in U(1)\rbrace.$$

\noindent The $U(1)$-action on $C(S^{2n+1}_q)$ is free by \cite[Cor.~3]{Sz03}, so we can think of $L^n_m$ as modules of sections of 
associated noncommutative line bundles. Note that our sign convention is opposite to that used in 
\cite{ABL15}, i.e.\ $L^n_m=\cL_{-m}$. 

Recall that there exists a $U(1)$-equivariant  $*$-homomorphism 
$\varphi: C(S^{2n+1}_q)\rightarrow C(S^3_q)$ given on standard generators by
\begin{equation*}
\varphi(z_i) =\begin{cases}
    z_0 &\text{ if $i = 0$} \\
    z_1 &\text{ if $i = 1$} \\
    0 &\text{ if $i \ge 2$.}
\end{cases}
\end{equation*}
Therefore, by~\cite[Thm.~0.1]{HM16}, the induced map
\begin{equation*}
\bigl(\varphi|_{C(\mathbb{C}{\rm P}^n_q)}\bigr)_*:
K_0\bigl(C(\mathbb{C}{\rm P}^n_q)\bigr)\longrightarrow
K_0\bigl(C(\mathbb{C}{\rm P}^1_q)\bigr)
\end{equation*}
satisfies
\begin{equation*}
\bigl(\varphi|_{C(\mathbb{C}{\rm P}^n_q)}\bigr)_*
\Bigl([L^n_m]\Bigr)=[L^1_m]
\end{equation*}
for any $m\in\mathbb{Z}$. Since the index pairing computation of
\cite[Thm.~2.1]{Haj00} proves that $[L^1_m]=[L^1_k]$ implies
$m=k$ for $0<q<1$, and the $q=1$ case is well known, we thus arrive at the following: 
\begin{thm}
The spectral subspaces
$L^n_m$ are pairwise stably non-isomorphic as finitely generated projective
left modules over $C(\mathbb{C}{\rm P}^n_q)$. That is, for all
$n\in\mathbb{N}\setminus\{0\}$ and any $m,k \in \mathbb{Z}$, we have
\begin{equation*}
[L^n_m] = [L^n_k] \quad\mbox{if and only if}\quad m=k.
\end{equation*}
\end{thm}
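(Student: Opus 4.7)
The plan is to reduce the general $n$ case to the known $n=1$ case by exploiting functoriality of $K_0$ together with the $U(1)$-equivariant $*$-homomorphism $\varphi\colon C(S^{2n+1}_q)\to C(S^3_q)$ introduced just before the theorem statement. All of the essential inputs have been cited in the preceding paragraph, so the proof is mostly a matter of assembling them in the right order.

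First, I would observe that, since $\varphi$ is $U(1)$-equivariant, it restricts to a unital $*$-homomorphism $\varphi|_{C(\mathbb{C}{\rm P}^n_q)}\colon C(\mathbb{C}{\rm P}^n_q)\to C(\mathbb{C}{\rm P}^1_q)$ between the fixed-point subalgebras. Applying the $K_0$ functor yields a group homomorphism between $K_0(C(\mathbb{C}{\rm P}^n_q))$ and $K_0(C(\mathbb{C}{\rm P}^1_q))$, and by the cited \cite[Thm.~0.1]{HM16} this homomorphism sends the class of each spectral subspace $[L^n_m]$ to the class $[L^1_m]$ of the corresponding spectral subspace on the quantum two-sphere.

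Next, I would argue by contrapositive. Suppose that $[L^n_m]=[L^n_k]$ in $K_0(C(\mathbb{C}{\rm P}^n_q))$. Applying the induced map $\bigl(\varphi|_{C(\mathbb{C}{\rm P}^n_q)}\bigr)_*$ to both sides and using the computation above gives $[L^1_m]=[L^1_k]$ in $K_0(C(\mathbb{C}{\rm P}^1_q))$. For $0<q<1$, the index-pairing result of \cite[Thm.~2.1]{Haj00} distinguishes all classes $[L^1_m]$ for different $m\in\mathbb{Z}$, so this forces $m=k$. For $q=1$, the claim reduces to the classical fact that line bundles over $\mathbb{C}{\rm P}^1$ with different degrees have non-isomorphic $K$-theory classes. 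The reverse implication is trivial.

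The only subtlety I anticipate is verifying that $\varphi$ is genuinely $U(1)$-equivariant and that $\varphi(L^n_m)\subseteq L^1_m$, which amounts to inspecting its action on generators (the defining formula manifestly preserves the $U(1)$-weight, since each $z_i$ has weight one and is sent either to a weight-one generator or to zero). Everything else is a formal consequence of functoriality and the cited results, so there is no real obstacle; the content of the theorem lies entirely in reducing to the $\mathbb{C}{\rm P}^1_q$ case where the distinguishing invariant is supplied by the index pairing.
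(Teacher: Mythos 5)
Your proposal follows exactly the argument the paper itself gives: restrict the $U(1)$-equivariant map $\varphi$ to the fixed-point subalgebras, use \cite[Thm.~0.1]{HM16} to see that the induced map on $K_0$ sends $[L^n_m]$ to $[L^1_m]$, and then invoke the index pairing of \cite[Thm.~2.1]{Haj00} for $0<q<1$ (and the classical fact for $q=1$) to conclude $m=k$. This is correct and matches the paper's reasoning, so no changes are needed.
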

\noindent
Observe that, for $0<q<1$,
 this fact was already proven in \cite[Prop. 4 and 5]{DL10} using an index pairing between the K-homology group
$K^0(C(\mathbb{C}{\rm P}^n_q))$  and the K-theory group $K_0(C(\mathbb{C}{\rm P}^n_q))$. 

To use the index pairing, for the rest of this section we assume $0<q<1$.
Denote by  $[\mu_0],\ldots,[\mu_n]\in K^0(C(\mathbb{C}{\rm P}^n_q))$ 
the K-homology generators constructed in \cite[Sec. 2]{DL10}. In the same paper, the authors proved that
\begin{equation}
\label{eq:pair_binom}
\langle[\mu_k], [L^n_{m}]\rangle = \binom{m}{k} \;
\end{equation}
for all $m \in\mathbb{N}$ and for all $0\leq k\leq n$. Here we adopt the convention that $\binom{m}{k}:=0$ when $k>m$. Furthermore, by {\cite[Prop. 3.2]{ABL15},
\begin{align}
\label{eq:pair_positive}
\langle [\mu_0], [L^n_{m}] \rangle = 1 \, \quad \text{and} \quad 
\langle [\mu_1], [L^n_{m}] \rangle = m \, 
\end{align}
for all $m\in\Z$ . We view the pairing with $[\mu_0]$ as computing the rank, and the pairing with $[\mu_1]$ as computing the noncommutative first Chern class. In agreement with the classical setting, we call $L^n_{-1}$ the section module of the noncommutative tautological line bundle, and we refer to $L^n_{1}$ as the section module of the dual noncommutative tautological line bundle. The latter is also known as the noncommutative Hopf line bundle. 

As mentioned in the introduction, 
the K-theory groups of quantum projective spaces have the same bases as their classical counterparts. 
This was proven in \cite{ABL15} by showing that, for $0\leq j \leq n$ and for $0\leq k\leq n$,
\begin{equation}
\label{eq:pair-un}
\inn{[\mu_k], t^j} = 
\begin{cases}
0 & \text{for} \quad j \not= k  \\
(-1)^j & \text{for} \quad j = k 
\end{cases} \, .
\end{equation}
Here $t:= [1] - [L^n_{1}]$ is the noncommutative Euler class of $L^n_{1}$.

We conclude this section by computing the pairings of the K-homology generators with the class of the section module of the noncommutative tautological line bundle $L^n_{-1}$.

\begin{prop}
\label{lem:pair}
For all positive integers $n$ and for all $0 \leq  k \leq n$, we have
$$
\langle [\mu_k], [L^n_{-1}] \rangle = (-1)^k.
$$
\end{prop}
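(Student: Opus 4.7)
The strategy is to express $[L^n_{-1}]$ explicitly in the basis $\{1,t,t^2,\ldots,t^n\}$ of $K_0(C(\mathbb{C}{\rm P}^n_q)) = \Z[t]/t^{n+1}$ and then invoke \eqref{eq:pair-un}. Indeed, \eqref{eq:pair-un} identifies $\{(-1)^k[\mu_k]\}_{k=0}^{n}$ as the K-homology basis dual to $\{t^j\}_{j=0}^{n}$, so the proposition is equivalent to the identity
\[
[L^n_{-1}] \;=\; 1 + t + t^2 + \cdots + t^n \quad\text{in}\quad \Z[t]/t^{n+1}.
\]

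To establish this I would exploit the fact that $L^n_1$ and $L^n_{-1}$ are mutually inverse invertible bimodules over $C(\mathbb{C}{\rm P}^n_q)$. Freeness of the diagonal $U(1)$-action on $C(S^{2n+1}_q)$ is equivalent to strongness of the $\Z$-grading $\bigoplus_m L^n_m$, so $L^n_1 L^n_{-1} = L^n_0 = C(\mathbb{C}{\rm P}^n_q)$, and the multiplication map upgrades this to a bimodule isomorphism $L^n_1 \otimes_{C(\mathbb{C}{\rm P}^n_q)} L^n_{-1} \cong C(\mathbb{C}{\rm P}^n_q)$. Under the ring structure on $K_0$ from \eqref{truncated} this reads $[L^n_1]\cdot[L^n_{-1}]=[1]$. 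Substituting $[L^n_1]=1-t$ and using the polynomial identity $(1-t)(1+t+\cdots+t^n) = 1-t^{n+1}\equiv 1 \pmod{t^{n+1}}$, I obtain $[L^n_{-1}]=1+t+\cdots+t^n$. Expanding the pairing bilinearly then yields
\[
\inn{[\mu_k],[L^n_{-1}]} \;=\; \sum_{j=0}^n \inn{[\mu_k],t^j} \;=\; (-1)^k.
\]

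The main subtlety is the passage from the bimodule isomorphism $L^n_1 \otimes L^n_{-1} \cong C(\mathbb{C}{\rm P}^n_q)$ to the ring identity $[L^n_1]\cdot[L^n_{-1}]=[1]$ in $\Z[t]/t^{n+1}$, since $C(\mathbb{C}{\rm P}^n_q)$ is noncommutative and $K_0$ does not automatically carry a ring structure. This is resolved by invoking the Picard-group action of invertible bimodules on $K_0$ via tensor product and verifying that this action realises the multiplicative structure of $\Z[t]/t^{n+1}$ used in \eqref{truncated}. Should one prefer to avoid the ring machinery altogether, non-degeneracy of the pairing offers an alternative route: $[L^n_{-1}]$ is uniquely determined by its values $\inn{[\mu_k],[L^n_{-1}]}$, which can in principle be computed directly from the explicit Fredholm-module representatives of $[\mu_k]$ constructed in \cite{DL10}.
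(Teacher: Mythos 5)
Your proposal is correct and takes essentially the same route as the paper: the paper's proof likewise rests on the identity $[L^n_{-1}] = [L^n_{1}]^{-1} = (1-t)^{-1} = 1 + t + \cdots + t^n$ in $\mathbb{Z}[t]/t^{n+1}$ combined with the pairings \eqref{eq:pair-un} (the paper merely dispatches $k=0,1$ separately via \eqref{eq:pair_positive}). Your additional justification of the invertibility $[L^n_1]\cdot[L^n_{-1}]=[1]$ via the strong $\Z$-grading is a sound elaboration of a step the paper leaves implicit.
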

\begin{proof}
 The cases $k=0,1$ are covered by \eqref{eq:pair_positive}.  For $2 \leq k \leq n$, we use the identity
\[\label{ttaut}
  [L^n_{-1}] = [L^n_{1}]^{-1} = (1-t)^{-1} = 1 + t + \cdots + t^n \quad \mbox{in} \ \  \mathbb{Z}[t]/t^{n+1}
\]
together with the pairings in \eqref{eq:pair-un}.
\end{proof}

\section{Vector bundles}

Although the $K_0$-groups of the Vaksman--Soibelman quantum projective spaces are determined by  associated  
noncommutative line bundles, expressing the generators as associated noncommutative vector bundles has its merits, as explained in the introduction.

For every $m=2, \dots, n$, consider the following surjection of Hopf algebras 
\[\label{pim}
\pi_m: \mathcal{O}(SU_q(m)) \longrightarrow \mathcal{O}(U(1)), \qquad \pi_m(U_{ij}) := \begin{cases} \delta_{ij}u^{-1} &  i<n \\
\delta_{ij} u^{m-1} & i=n \\
\end{cases} \, ,
\]
where $U_{ij}$ are the matrix coefficients of the fundamental representation of $SU_q(m)$.
To prove that $\pi_m$ is well defined, we have to verify that the determinant formulae (1.17) and (1.18) in \cite{Wor88} are satisfied. 
This can be done by a direct computation taking advantage of the fact that $u$ is unitary and that $\pi_m$ assigns zero to 
off-diagonal entries. With the help of $\pi_m$, we define the prolongations of principal comodule algebras:
$$
\mathcal{P}_{U(1)}(C(S^{2n+1}_q)) \overset{\mathcal{O}(U(1))}{\square} \mathcal{O}(SU_q(m)).
$$
Next, taking the fundamental representation $V_m$ of $SU_q(m)$, 
we construct the associated finitely generated projective left $C(\mathbb{C}{\rm P}^n_q)$-modules
\[\label{fnm}
\mathcal{F}_m^n := \mathcal{P}_{U(1)}(C(S^{2n+1}_q)) \overset{\mathcal{O}(U(1))}{\square} \mathcal{O}(SU_q(m)) \overset{\mathcal{O}(SU_q(m))}{\square} V_m.
\]


We are now ready to prove our main result:
\begin{thm}\label{main}
For any positive integer $n$ and $0<q\leq 1$, 
the classes 
$$
\cE^n_0:=[1],\quad \cE^n_1:= [L^n_{1}] -[1],\quad  \cE^n_m := [\mathcal{F}_m^n] - m[1] ,\quad m=2, \dots, n,
$$ 
form a basis of $K_0(C(\mathbb{C}{\rm P}^n_q))$.
\end{thm}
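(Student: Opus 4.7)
The plan is to use the ring presentation $K_0(C(\mathbb{C}{\rm P}^n_q))\cong\Z[t]/t^{n+1}$ from~\eqref{truncated} and to verify that the change-of-basis matrix from the standard basis $\{1,t,\ldots,t^n\}$ to the proposed basis $\{\cE^n_0,\cE^n_1,\ldots,\cE^n_n\}$ has determinant $\pm 1$. First I would identify $\mathcal{F}_m^n$ concretely via the standard associativity of the cotensor product, which gives
\[
\mathcal{F}_m^n \;\cong\; \mathcal{P}_{U(1)}(C(S^{2n+1}_q))\overset{\mathcal{O}(U(1))}{\square}\pi_m^{\ast}(V_m),
\]
where $\pi_m^{\ast}(V_m)$ denotes $V_m$ viewed as a left $\mathcal{O}(U(1))$-comodule via $\pi_m$. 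Since $\pi_m$ is diagonal, this restriction splits into one-dimensional weight spaces: $m-1$ copies of weight $-1$ and one copy of weight $m-1$. Hence $\mathcal{F}_m^n\cong (L^n_{-1})^{\oplus(m-1)}\oplus L^n_{m-1}$.

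Next, using $[L^n_s]=(1-t)^s$ in $\Z[t]/t^{n+1}$ -- in particular $[L^n_{-1}]=1+t+\cdots+t^n$ from~\eqref{ttaut} -- I would compute
\[
\cE^n_m \;=\; (m-1)(1+t+\cdots+t^n)+(1-t)^{m-1}-m \qquad (m\geq 2).
\]
Expanding shows that both the constant and the linear coefficients cancel and that $\cE^n_m=\sum_{j=2}^{n}c_{m,j}\,t^j$ with $c_{m,j}:=(m-1)+(-1)^j\binom{m-1}{j}$. Together with $\cE^n_0=1$ and $\cE^n_1=-t$, this presents the change-of-basis matrix in block-triangular form, with upper-left $2\times 2$ block $\mathrm{diag}(1,-1)$ and lower-right $(n-1)\times(n-1)$ block $C:=(c_{m,j})_{m,j=2}^{n}$.

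The crux is then to show $|\det(C)|=1$. The key trick is the telescoping column operation $C_j\mapsto C_j-C_{j-1}$ for $j=3,\ldots,n$: Pascal's rule yields
\[
c_{m,j}-c_{m,j-1} \;=\; (-1)^j\binom{m}{j},
\]
which vanishes whenever $j>m$. The resulting matrix is therefore lower triangular, with diagonal entries $c_{2,2}=1$ and $(-1)^m$ for $m=3,\ldots,n$, so $\det(C)=\pm 1$. I expect the main obstacle to be spotting this particular column operation: the identification of $\mathcal{F}_m^n$ is formal once the sign conventions for weights are pinned down, and the polynomial expansion is routine, but the matrix $C$ looks arithmetically untidy at first sight and only becomes transparent after this telescoping manipulation.
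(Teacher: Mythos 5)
Your proposal is correct and follows essentially the same route as the paper: the same decomposition $\mathcal{F}_m^n \cong (L^n_{-1})^{\oplus(m-1)}\oplus L^n_{m-1}$, the same expansion in the basis $\{1,t,\ldots,t^n\}$ yielding the coefficients $(m-1)+(-1)^k\binom{m-1}{k}$, and the same reduction to showing the resulting integer matrix is unimodular. The only difference is that where the paper merely asserts invertibility is ``straightforward to verify using elementary row operations and induction,'' you supply the explicit telescoping column operation $C_j\mapsto C_j-C_{j-1}$ with Pascal's rule, which makes the matrix lower triangular with diagonal entries $\pm 1$ --- a welcome concretization of the step the paper leaves to the reader.
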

\begin{proof}
To begin with, plugging in the explicit formula \eqref{pim} into the definition \eqref{fnm}, we derive the decomposition
$$
\mathcal{F}_m^n =  ( L^n_{-1})^{\oplus (m-1)} \oplus L^n_{m-1}.
$$
Combining \eqref{atiyah} and \eqref{truncated}, we know that $\{1,t,\ldots,t^n\}$ is a basis of $K_0(C(\mathbb{C}{\rm P}^n_q))$.
We now use \eqref{ttaut} to write $\cE_m^n$ in the above basis
\[\label{basis}
\cE^n_0=1,\quad \cE^n_1=-t,\quad\cE_m^n=\sum_{k=2}^n\left((m-1)+(-1)^k\binom{m-1}{k}\right)t^k,\quad m\geq 2.
\]
\noindent 
It remains to show that the matrix ${\rm M}_n$ implementing \eqref{basis} is invertible over the integers.
Since $\mathrm{M}_{n+1}$ always contains $\mathrm{M}_n$ as an $n \times n$ 
submatrix in the upper-left corner, it is straightforward to verify the claim using elementary row operations and induction.
\end{proof}

For $n=2$, the above theorem yields 
$$
[1], \quad [L^2_{1}]-[1], \quad [L^2_{-1} \oplus L^2_{1}]-2[1]
$$
 as a basis of $K_0(C(\mathbb{C}{\rm P}^2_q))$. This agrees with the K-theory computations done in~\cite{FHMZ17} for the 
 quantum complex projective plane. Note also that the matrix ${\rm M}_n$ used in the above proof agrees up to a sign with
 the matrix of pairings $\inn{[\mu_k], \cE^n_j}$ that can be computed using \eqref{eq:pair_binom} and Proposition \ref{lem:pair}.

\section*{Acknowledgements}\noindent
This work is part of the project Quantum Dynamics partially supported by
EU-grant H2020-MSCA-RISE-2015-691246 and Polish Government grants
3542/H2020/2016/2 and  328941/PnH/2016. 
We acknowledge a substantial logistic support from the Max 
Planck Insitute for Mathematics in the Sciences in Leipzig, where much of this research was carried out. 
It is a pleasure to thank Nigel Higson for discussions and references.

\end{document}